\newtheorem{theorem}{Theorem}[section]
\newtheorem{corollary}{Corollary}[theorem]
\newtheorem{lemma}[theorem]{Lemma}
\theoremstyle{definition}
\theoremstyle{remark}
\numberwithin{equation}{section}
\begin{document}
\

\title{Protected Vertices in Motzkin trees}

%    Information for first author
\author{Anthony Van Duzer}
%    Address of record for the research reported here
\address{Department of Mathematics, University of Florida, Gainesville, Florida 32601}
%    Current address
\curraddr{Department of Mathematics,
University of Florida, Gainesville, Florida 32611}
\email{avanduzer@ufl.edu}

%    General info
%\subjclass[2000]{Primary 54C40, 14E20; Secondary 46E25, 20C20}

%\date{January 1, 2001 and, in revised form, June 22, 2001.}

\keywords{Enumerative Combinatorics, Motzkin trees}

\begin{abstract}
In this paper we find recurrence relations for the asymptotic probability a vertex is $k$ protected in all Motzkin trees.  We use a similar technique to calculate the probabilities for balanced vertices of rank $k$.  From this we calculate upper and lower bounds for the probability a vertex is balanced and upper and lower bounds for the expected rank of balanced vertices.
\end{abstract}

\maketitle

\section{Introduction}

\subsection{Motzkin Tree}
A Motzkin tree, also referred to as a 0-1-2 tree, or unary-binary tree, is a rooted plane tree where each vertex may have either 0, 1, or 2 children.
\begin{theorem}
The generating function, $M(x)=\displaystyle\sum m_n x^n$, for the number of all Motzkin trees with n vertices is given by $M(x)=\dfrac{1-x-\sqrt{1-2x-3x^2}}{2x}$ .
\end{theorem}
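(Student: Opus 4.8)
The plan is to derive an algebraic equation for $M(x)$ from the recursive structure of Motzkin trees and then solve it. A Motzkin tree is a root together with an ordered list of $0$, $1$, or $2$ subtrees, each of which is itself a Motzkin tree. Translating this decomposition into generating functions, with the root contributing a factor of $x$, yields
\begin{equation*}
M(x) = x\bigl(1 + M(x) + M(x)^2\bigr),
\end{equation*}
where the three summands correspond to a root with no child, one child, and two children, and the product $M(x)^2$ is legitimate because the tree is plane, so the two subtrees are ordered.

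Next I would rewrite this as the quadratic $x M(x)^2 + (x-1)M(x) + x = 0$ and apply the quadratic formula in the unknown $M(x)$, obtaining
\begin{equation*}
M(x) = \frac{1 - x \pm \sqrt{(1-x)^2 - 4x^2}}{2x} = \frac{1 - x \pm \sqrt{1 - 2x - 3x^2}}{2x}.
\end{equation*}
It then remains to select the correct branch of the square root. Since $1 - 2x - 3x^2 = (1+x)(1-3x)$ has constant term $1$, it has a well-defined formal power series square root with constant term $1$. With the plus sign the numerator has constant term $2$, so the quotient has a pole at $x = 0$, which is impossible for a series counting combinatorial objects; with the minus sign the numerator is $2x^2 + O(x^3)$, so $M(x)$ is a genuine power series with $M(0) = 0$ and $[x^1]M(x) = 1$, matching the unique one-vertex tree. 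Hence the minus sign is forced, giving the claimed formula.

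Finally I would sanity-check the result by expanding $\frac{1-x-\sqrt{1-2x-3x^2}}{2x}$ and confirming the first coefficients $m_1 = 1$, $m_2 = 1$, $m_3 = 2$, $m_4 = 4$ (the Motzkin numbers), and by noting that the branch point nearest the origin is at $x = 1/3$, so the series has positive radius of convergence and genuinely enumerates finite trees. The only genuinely delicate step is justifying the branch choice; the functional equation and the algebra around it are routine applications of the symbolic method.
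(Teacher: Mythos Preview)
Your proof is correct and follows essentially the same approach as the paper: derive the functional equation $M(x)=x+xM(x)+xM(x)^2$ from the root decomposition and solve the resulting quadratic. You actually go further than the paper in carefully justifying the choice of sign in front of the square root and verifying the first few coefficients, which the paper omits.
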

\begin{proof}
This comes from the relationship $M(x)=x+xM(x)+xM(x)^2$ which is based on the fact the root can be a leaf, the parent of a single child, or the parent of two children and each child of the root would be another Motzkin tree.  Given that relationship you can use the quadratic formula to arrive at the desired generating function.
\end{proof}
\subsection{$k$-protected}  A vertex is said to be of rank $k$ if the shortest path, travelling strictly from parent to child, from the vertex to a leaf is of length $k$.  A vertex is $k$ protected if it is of rank $j$ for some $j \geq k$.  So a leaf would be 0 protected and rank 0, the parent of a leaf is rank 1 and both 0 and 1 protected, and the parent of children who are all 1 protected is 2 protected, but not necessarily rank 2.  One of the advantages of working with $k$-protected over rank k is the recurrent nature of protection; a vertex is $k$ protected if and only if all it's children are $k-1$ protected the same statement does not hold true for rank $k$ since a vertex could be rank 6 and have a child of rank 7.  A vertex is balanced if the shortest path is the same length as the longest path so a balanced $k$-protected vertex would be a vertex where all paths to a leaf are of length at least $k$ and all of these paths are the exact same length and a balanced vertex of rank $k$ is a vertex where all paths to a leaf are of length exactly $k$.
\subsection{Purpose of paper}
In this paper we will find a recurrence relation for the probability a vertex is $k$-protected, we will find a recurrence relation for the probability a vertex is balanced and of rank k, and finally we will get upper and lower bounds for the probability a vertex is balanced and the expected rank of a balanced vertex.

\section{Leaves}The proportion of leaves in Motzkin trees had previously been calculated by Gi-Sang Cheon and Louis W. Shapiro in \cite{A}; however, for the sake of completeness and to give a brief introduction to the technique used in this paper it is included.
\begin{theorem}
The generating function for the number of leaves in all Motzkin trees with  n vertices is given by $L(x)=\dfrac{x}{\sqrt{1-2x-3x^2}}$.
\end {theorem}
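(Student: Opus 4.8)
The plan is to track leaves with a second variable. Introduce the bivariate generating function $M(x,u)=\sum_{T}x^{\abs{T}}u^{\ell(T)}$, where the sum ranges over all Motzkin trees $T$, $\abs{T}$ is the number of vertices, and $\ell(T)$ is the number of leaves of $T$. The same root decomposition used in Theorem 1.1 (the root is a leaf, or has one child, or has two children, each a Motzkin tree) now gives $M(x,u)=xu+xM(x,u)+xM(x,u)^2$; the only change from Theorem 1.1 is that a root which is itself a leaf contributes an extra factor of $u$. Since $L(x)=\sum_n \ell_n x^n$ with $\ell_n$ the total number of leaves over all $n$-vertex Motzkin trees, we have $L(x)=\frac{\partial M}{\partial u}(x,1)$.

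Next I would differentiate the functional equation in $u$ and then set $u=1$. Writing $M=M(x,1)$, which is exactly the series of Theorem 1.1, and $M_u=\frac{\partial M}{\partial u}(x,1)$, this produces $M_u = x + xM_u + 2xMM_u$, and solving for $M_u$ gives $M_u=\dfrac{x}{1-x-2xM}$.

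The final step is to simplify the denominator using Theorem 1.1. From $M(x)=\frac{1-x-\sqrt{1-2x-3x^2}}{2x}$ we get $2xM(x)=1-x-\sqrt{1-2x-3x^2}$, hence $1-x-2xM(x)=\sqrt{1-2x-3x^2}$, and therefore $L(x)=\dfrac{x}{\sqrt{1-2x-3x^2}}$, as claimed.

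I do not expect a serious obstacle here; the only point needing a word of justification is that differentiating the identity term by term is legitimate, which holds because it is an identity of formal power series and only finitely many trees contribute to each coefficient. One can also bypass bivariate series altogether and argue combinatorially: counting pairs (tree, distinguished leaf) by a root decomposition yields $L(x)=x+xL(x)+2xM(x)L(x)$, and solving gives the same answer. In either route the crux is the elementary identity $1-x-2xM(x)=\sqrt{1-2x-3x^2}$.
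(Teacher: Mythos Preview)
Your argument is correct and is essentially the paper's proof in a bivariate wrapper: differentiating $M(x,u)=xu+xM(x,u)+xM(x,u)^2$ at $u=1$ produces exactly the recurrence $L(x)=x+xL(x)+2xM(x)L(x)$ that the paper derives directly by counting pairs (tree, marked leaf), and you even state this direct version at the end. The only substantive step in either route is the simplification $1-x-2xM(x)=\sqrt{1-2x-3x^2}$, which you handle correctly.
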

\begin{proof}
We observe the relationship $L(x)=x+xL(x)+2xL(x)M(x)$.  To prove this we isolate the root.  If the root is a leaf it contributes x to the generating function.  If the root is not a leaf it either has one child or two children.  If it has a single child then the number of leaves the tree possess is simply the number of leaves of the subtree giving rise to the term $xL(x)$.  If it has 2 children than we can again cut off the root creating two subtrees. On one subtree we count the total number of possible leaves which is given by $L(x)$; we multiply this by the total number of configurations. The total number of configurations with that many leaves is the total number of trees we can make for the right subtree which is simply the number of Motzkin trees hence the term $M(x)$. The power series $xL(x)M(x)$ is the generating function for the total number of leaves only counting the left subtree.  Once we multiply this by 2 we get the total number of leaves in all trees where the root has two children.  Plugging in the known $M(x)$ and doing some algebraic manipulation we arrive at the given generating function.
\end{proof}
\begin {corollary}
The probability that a random vertex of a random Motzkin-tree is a leaf converges to $\frac{1}{3}$ as the number of vertices goes to infinity.
\end{corollary}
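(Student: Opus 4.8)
The plan is to read the stated probability as a ratio of generating-function coefficients and then extract its limit by singularity analysis. Write $m_n = [x^n]M(x)$ and $\ell_n = [x^n]L(x)$. A uniformly random Motzkin tree with $n$ vertices has on average $\ell_n/m_n$ leaves, so if we additionally pick a vertex uniformly at random among the $n$ vertices, the probability it is a leaf is $p_n = \dfrac{\ell_n}{n\,m_n}$. Thus the corollary is the assertion $p_n \to \tfrac13$, i.e.\ $\ell_n \sim \tfrac13\,n\,m_n$. It is convenient to replace $n\,m_n$ by $[x^n]\bigl(x M'(x)\bigr)$, since $x M'(x) = \sum_n n m_n x^n$.

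Next I would locate the dominant singularities of $L(x)$ and $x M'(x)$. Both are controlled entirely by the radical $\sqrt{1-2x-3x^2} = \sqrt{(1-3x)(1+x)}$, whose branch points are $x=\tfrac13$ and $x=-1$ (the apparent pole of $M$ at $x=0$ being removable). Hence both functions extend analytically to a domain consisting of a disk of radius slightly larger than $\tfrac13$ with a small slit removed at $x=\tfrac13$, which is exactly the setting in which the Flajolet--Odlyzko transfer theorems apply. I would then compute the leading singular behaviour at $x=\tfrac13$: from $L(x) = \tfrac{x}{\sqrt{1+x}}\,(1-3x)^{-1/2}$ one gets $L(x) \sim \tfrac{1}{2\sqrt3}\,(1-3x)^{-1/2}$, and differentiating $M(x) = \tfrac{1-x}{2x} - \tfrac{\sqrt{1+x}}{2x}\,(1-3x)^{1/2}$ (the dominant term coming from differentiating the factor $(1-3x)^{1/2}$) and multiplying by $x$ gives $x M'(x) \sim \tfrac{\sqrt3}{2}\,(1-3x)^{-1/2}$.

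Finally, applying the transfer theorem to both expansions, with $[x^n](1-3x)^{-1/2} \sim 3^n n^{-1/2}/\sqrt{\pi}$ and the remainders of strictly smaller order, yields $\ell_n \sim \tfrac{1}{2\sqrt3}\cdot\tfrac{3^n n^{-1/2}}{\sqrt{\pi}}$ and $n m_n \sim \tfrac{\sqrt3}{2}\cdot\tfrac{3^n n^{-1/2}}{\sqrt{\pi}}$, so $p_n \to \dfrac{1/(2\sqrt3)}{\sqrt3/2} = \tfrac13$. The main technical point is the justification of this asymptotic transfer: one must check that $L$ and $x M'$ are analytic in a suitable slit neighbourhood of $x=\tfrac13$ (immediate from the explicit square-root form) and that subtracting the leading singular term leaves coefficients that are $o(3^n n^{-1/2})$. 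Alternatively one can avoid the general theorem entirely by expanding $L$ and $x M'$ as explicit combinations of the binomial series $(1-3x)^{\pm 1/2}$ plus functions analytic past $x=\tfrac13$, and invoking the classical asymptotics of $\binom{2n}{n}$; everything else is routine algebra.
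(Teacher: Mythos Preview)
Your proof is correct and follows essentially the same route as the paper: both arguments extract the asymptotics of $\ell_n$ and of $n\,m_n$ from the square-root singularity of $L(x)$ and $M(x)$ at $x=\tfrac13$ and then take the ratio. The paper simply quotes the resulting asymptotic formulas for the two coefficient sequences, whereas you make the underlying singularity analysis (via the Flajolet--Odlyzko transfer theorem applied to $L(x)$ and $xM'(x)$) explicit.
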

\begin{proof}
We can extract the coefficent from the generating function and we get that $l(n)$, the total number of leaves in all Motzkin trees of size $n$, is asymptotically equal to $$\dfrac{\left(\sqrt{\dfrac{3}{\pi}}\right)3^n}{2\sqrt{n}}$$ whereas the number of vertices is asymptotically equal to 
$$\dfrac{n3^{n+1}\sqrt{3}\left(1+\frac{1}{16n}\right)}{(2n+3)\sqrt{(n+2)\pi}}.$$
Comparing the 2 and taking the limit as n goes to $\infty$ we get the ratio is $\frac{1}{3}$.
\end{proof}
This is also the probability a vertex is balanced and of rank 0 since every leaf is balanced.
\section{$k$-protected vertices}
\begin{lemma}
The generating function for the number of vertices that are k protected in all trees with n vertices is given by $$P_k(x)=\dfrac{R_k(x)}{\sqrt{1-2x-3x^2}}=\dfrac{xR_{k-1}(x)+xR_{k-1}^2(x)}{\sqrt{1-2x-3x^2}}$$ where $P_k(x)$ is the generating function for all $k$-protected vertices in all Motzkin tree with n-vertices and $R_k(x)$ is the number of trees where the root is $k$-protected.
\end{lemma}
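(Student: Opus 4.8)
The plan is to mirror the bijective decomposition used for $L(x)$ in the leaves section, but now tracking $k$-protection instead of leaf-hood. First I would introduce $R_k(x)$ as the generating function for Motzkin trees whose root is $k$-protected (counted by number of vertices), so that by the recurrent characterization of protection stated in the introduction — a vertex is $k$-protected if and only if all of its children are $(k-1)$-protected — the root of a tree is $k$-protected precisely when it is a non-leaf all of whose child-subtrees have $(k-1)$-protected roots. Splitting on whether the root has one child or two children gives $R_k(x) = xR_{k-1}(x) + xR_{k-1}^2(x)$, which establishes the second equality in the claimed formula; I would note the base case $R_0(x) = M(x)$ since every root is $0$-protected.

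Next I would set up the recurrence for $P_k(x)$ itself by isolating the root exactly as in the proof for $L(x)$. A $k$-protected vertex in a tree is either (a) the root, when the root is $k$-protected, contributing $R_k(x)$; or (b) a $k$-protected vertex lying in one of the root's subtrees. If the root has a single child, the $k$-protected vertices below the root are exactly those of the child subtree, contributing $xP_k(x)$. If the root has two children, then by the same "count in one subtree, weight by all configurations of the other" argument used for leaves, the $k$-protected vertices in the left subtree contribute $xP_k(x)M(x)$, and doubling handles the symmetric right-subtree count, giving $2xP_k(x)M(x)$. Thus
\[
P_k(x) = R_k(x) + xP_k(x) + 2xP_k(x)M(x).
\]

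Then I would solve this linear equation for $P_k(x)$, obtaining $P_k(x) = \dfrac{R_k(x)}{1 - x - 2xM(x)}$, and simplify the denominator: using $M(x) = \dfrac{1-x-\sqrt{1-2x-3x^2}}{2x}$ from Theorem 1.1, one computes $1 - x - 2xM(x) = 1 - x - (1 - x - \sqrt{1-2x-3x^2}) = \sqrt{1-2x-3x^2}$. Substituting this in yields $P_k(x) = \dfrac{R_k(x)}{\sqrt{1-2x-3x^2}}$, and combining with $R_k(x) = xR_{k-1}(x) + xR_{k-1}^2(x)$ gives the full stated formula. (As a sanity check, $k=1$ recovers $R_1 = xM + xM^2 = M - x$, so $P_1 = (M-x)/\sqrt{1-2x-3x^2}$, the count of non-leaves, consistent with the leaf corollary.)

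The only real subtlety — the step I would be most careful about — is justifying the two-children case of the $P_k$ recurrence: one must argue that summing over all trees, the total number of $k$-protected vertices contained strictly in the left subtree equals $\big(\sum \text{($k$-protected vertices per left subtree)}\big)\cdot\big(\sum \text{(choices of right subtree)}\big) = P_k(x)M(x)$ after extracting the $x$ for the root, and that left/right symmetry makes the total over both subtrees exactly twice this, with no double-counting since a vertex lies in exactly one of the two subtrees. The algebraic simplification of the denominator is routine given Theorem 1.1, so I do not anticipate difficulty there.
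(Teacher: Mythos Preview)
Your proposal is correct and follows essentially the same approach as the paper: both establish the linear recurrence $P_k(x)=R_k(x)+xP_k(x)+2xP_k(x)M(x)$ by isolating the root exactly as in the leaves argument, and both derive $R_k(x)=xR_{k-1}(x)+xR_{k-1}^2(x)$ from the characterization that the root is $k$-protected iff each of its (one or two) children is $(k-1)$-protected. You simply reverse the order of the two parts and supply more detail (the explicit denominator simplification, the base case $R_0=M$, and the sanity check for $k=1$), none of which departs from the paper's method.
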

\begin{proof}
This proof comes in two parts.  The first part is that $P_k(x)=\dfrac{R_k(x)}{\sqrt{1-2x-3x^2}}$ and the second is that $R_k(x)=xR_{k-1}(x)+xR_{k-1}^2(x)$. 
\vspace{.1 in}
\newline
First we will prove that $P_k(x)=\dfrac{R_k(x)}{\sqrt{1-2x-3x^2}}$ . This follows from the same recurrence relationship we used to find the generating function for the number of leaves.  We have $P_k(x)=R_k(x)+xP_k(x)+2xP_k(x)M(x)$  which gives us \newline $P_k(x)=\dfrac{R_k(x)}{\sqrt{1-2x-3x^2}}$ after a bit of manipulation.
\vspace{.1 in}
\newline
Next we need to prove the relationship $R_k(x)=xR_{k-1}(x)+xR_{k-1}^2(x)$.  This comes from the fact the root of a tree can only be k protected is if it has a single  child who is $k-1$ protected or it has two children who are both $k-1$ protected.
\end{proof}
\begin {lemma} (Bender's Lemma) \cite{D}
\newline
Take generating functions $A(x)=\sum a_nx^n$ and $B(x)=\sum b_n x^n$ with radius of convergence $\alpha >\beta \geq 0$ where $\alpha$ goes with $A(x)$ and $\beta$ goes with $B(x)$.  If $\frac{b_{n-1}}{b_n}$ approaches a limit b as n approaches infinity and $A(b) \neq 0$ then $c_n \sim A(b)b_n$ where $\sum c_nx^n$=A(x)B(x).
\end {lemma}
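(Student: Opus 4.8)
The plan is to reduce the statement to Tannery's theorem (dominated convergence for series). By the definition of the product, $c_n=\sum_{k=0}^{n}a_k b_{n-k}$, and since $b_{n-1}/b_n$ has a finite limit we have $b_n\neq 0$ for all large $n$; for such $n$ I would divide through to obtain
\[
\frac{c_n}{b_n}=\sum_{k=0}^{n}a_k\,\frac{b_{n-k}}{b_n},
\]
so it is enough to prove that the right-hand side tends to $\sum_{k\ge 0}a_k b^k=A(b)$. Once that is done, the hypothesis $A(b)\neq 0$ upgrades ``$c_n/b_n\to A(b)$'' to the claimed equivalence $c_n\sim A(b)b_n$.

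I would first note that $b$ is forced to equal $\beta$: the hypothesis $b_{n-1}/b_n\to b$ is precisely the ratio-test condition identifying $b$ with the radius of convergence of $B$, so $b=\beta$. In particular $b<\alpha$, so $b$ lies strictly inside the disc of convergence of $A$ and $\sum_k a_k b^k$ converges absolutely to $A(b)$. Next, for each fixed $k$ the telescoping identity
\[
\frac{b_{n-k}}{b_n}=\prod_{i=1}^{k}\frac{b_{n-i}}{b_{n-i+1}}
\]
presents $b_{n-k}/b_n$ as a product of $k$ factors, each tending to $b$ as $n\to\infty$; hence $a_k\,b_{n-k}/b_n\to a_k b^k$ termwise. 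The remaining issue is therefore purely one of interchanging $\lim_n$ with $\sum_k$.

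Producing a summable majorant is the crux, and this is where the real care is needed. Fixing $\rho$ with $\beta<\rho<\alpha$, the convergence $b_{n-1}/b_n\to\beta$ furnishes an $N$ with $|b_{m-1}|\le\rho\,|b_m|$ for all $m>N$; iterating backwards gives $|b_j|\le\rho^{i}\,|b_{j+i}|$ whenever $j\ge N$ and $i\ge 0$. For the ``bulk'' terms --- those $k$ with $n-k\ge N$ --- this yields at once $|a_k\,b_{n-k}/b_n|\le|a_k|\,\rho^{k}$, and $\sum_k|a_k|\rho^{k}<\infty$ since $\rho<\alpha$, so the bulk is dominated uniformly in $n$ by the tail of a convergent series. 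The at most $N$ leftover terms --- those with $n-k<N$ --- do not obey the $\rho$-bound and must be estimated individually: there $b_{n-k}$ takes one of finitely many fixed values, $|b_n|$ is at least a constant multiple of $\rho^{-n}$ (again by the iteration), and $|a_k|\le C(\alpha')^{-k}$ for some fixed $\alpha'\in(\rho,\alpha)$ and constant $C$; since $n-k<N$ forces $k=n-O(1)$, these bounds combine to show each such term is $O((\rho/\alpha')^{n})$, hence tends to $0$. With the bulk dominated and the finitely many edge terms vanishing, Tannery's theorem gives $c_n/b_n\to\sum_{k\ge 0}a_k b^k=A(b)$, completing the proof. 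The only genuinely delicate point is this separation of the ``edge'' terms from the uniform bound; everything else is routine bookkeeping with the Cauchy product and the telescoped ratio.
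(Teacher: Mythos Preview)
The paper does not actually prove this lemma: it is stated with a citation to \cite{D} (Flajolet--Sedgewick) and then used as a black box, so there is no ``paper's own proof'' to compare against. Your argument is the standard one and is correct. The identification $b=\beta$ via the ratio test, the telescoped product giving $b_{n-k}/b_n\to b^k$, and the Tannery/dominated-convergence step with the split into bulk terms (uniformly dominated by $|a_k|\rho^k$) and $O(1)$ many edge terms (each of size $O((\rho/\alpha')^n)$) are exactly how Bender's original proof proceeds. Your handling of the edge terms is the only place that needs the strict inequality $\alpha>\beta$, and you use it correctly by inserting an intermediate $\alpha'\in(\rho,\alpha)$; one minor remark is that the lower bound $|b_n|\ge c\rho^{-n}$ implicitly requires choosing $N$ large enough that $b_N\neq 0$, which is harmless since the hypothesis forces $b_n\neq 0$ eventually.
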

\begin{theorem}
Let $p_k$ be the  asymptotic proportion of all $k$-protected vertices in all Motzkin trees compared to all vertices in all Motzkin trees then $p_k=\frac{1}{3}p_{k-1}+\frac{1}{3}p_{k-1}^2$.
\end{theorem}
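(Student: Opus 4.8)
The plan is to reduce everything to the single identity $p_k = R_k(1/3)$, valid for every $k \ge 0$; given that, the theorem follows just by evaluating the functional relation $R_k(x) = x R_{k-1}(x) + x R_{k-1}(x)^2$ at $x = 1/3$. Here $p_k = \lim_{n\to\infty}[x^n]P_k(x)/v(n)$, where $v(n) = n m_n$ denotes the total number of vertices over all Motzkin trees with $n$ vertices. The first point is that, instead of comparing $P_k$ with the total-vertex series $\sum_n v(n)x^n$, it suffices to compare it with $(1-2x-3x^2)^{-1/2}$: since $L(x) = x(1-2x-3x^2)^{-1/2}$ we have $[x^n](1-2x-3x^2)^{-1/2} = l(n+1)$, and since $L$ has its dominant singularity at $x = 1/3$ we have $l(n+1)/l(n) \to 3$, so the Corollary of Section~2 (that $l(n)/v(n) \to 1/3$) gives $v(n) \sim l(n+1) = [x^n](1-2x-3x^2)^{-1/2}$.

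Next I would prove that $[x^n]P_k(x) \sim R_k(1/3)\,[x^n](1-2x-3x^2)^{-1/2}$. From $R_0 = M$ (every vertex is $0$-protected) and $R_k = x R_{k-1} + x R_{k-1}^2$ one sees by induction that each $R_k$ is a polynomial in $M$ with polynomial coefficients in $x$; hence $R_k$ is analytic for $|x| < 1/3$ and picks up from $M$ only a square-root branch point at $x = 1/3$, so $R_k(x) = R_k(1/3) + O\!\left((1-3x)^{1/2}\right)$ there. Writing $P_k(x) = R_k(1/3)(1-2x-3x^2)^{-1/2} + \left(R_k(x)-R_k(1/3)\right)(1-2x-3x^2)^{-1/2}$, the second summand has at worst a $(1-3x)^{1/2}$ singularity at $x = 1/3$ (its only other singularity, at $x = -1$, being harmless), so a standard transfer estimate bounds its coefficients by $O(3^n n^{-3/2}) = o(v(n))$. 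Hence $[x^n]P_k(x) \sim R_k(1/3)\,[x^n](1-2x-3x^2)^{-1/2} \sim R_k(1/3)\,v(n)$, that is $p_k = R_k(1/3)$. (Bender's Lemma is well suited to the leading term $[x^n](1-2x-3x^2)^{-1/2}$ itself, through the factorization $(1+x)^{-1/2}(1-3x)^{-1/2}$; an easy induction also gives $R_k(1/3) > 0$ with $R_0(1/3) = M(1/3) = 1$, so $p_0 = 1$ and $p_1 = 2/3$, matching the proportion of non-leaf vertices.) Evaluating $R_k(x) = x R_{k-1}(x) + x R_{k-1}(x)^2$ at $x = 1/3$ now yields $R_k(1/3) = \tfrac13 R_{k-1}(1/3) + \tfrac13 R_{k-1}(1/3)^2$, that is $p_k = \tfrac13 p_{k-1} + \tfrac13 p_{k-1}^2$.

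I expect the genuine obstacle to be the asymptotic $[x^n]P_k(x) \sim R_k(1/3)\,[x^n](1-2x-3x^2)^{-1/2}$. Because $R_k(x)$ carries $M$'s branch point it is \emph{not} analytic at the dominant singularity $x = 1/3$, so Bender's Lemma cannot be applied verbatim to the product $R_k(x)\cdot(1-2x-3x^2)^{-1/2}$ (both factors have radius of convergence $1/3$). The real work is therefore the reduction to the constant $R_k(1/3)$ --- showing that the coefficients of $\left(R_k(x)-R_k(1/3)\right)(1-2x-3x^2)^{-1/2}$ are of strictly smaller order than $v(n)$; once that is settled, the argument collapses to the one-line substitution $x = 1/3$ in the recursion for $R_k$.
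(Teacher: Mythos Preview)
Your argument is correct and lands on the same key identity $p_k = R_k(1/3)$ that underlies the paper's proof, but you reach it by a different mechanism. The paper exploits the algebraic structure of $M$: since $R_0 = M = \dfrac{1-x-\sqrt{1-2x-3x^2}}{2x}$, an induction shows every $R_k$ has the form $\dfrac{A_k + B_k\sqrt{1-2x-3x^2}}{2x}$ with $A_k,B_k$ polynomials. Then $P_k = \dfrac{A_k}{2x\sqrt{1-2x-3x^2}} + \dfrac{B_k}{2x}$; the second summand has finitely many nonzero coefficients and contributes nothing asymptotically, while in the first summand $A_k/(2x)$ is rational with radius of convergence $>1/3$, so Bender's Lemma applies \emph{verbatim} and yields $p_k = \dfrac{A_k(1/3)}{2/3} = R_k(1/3)$. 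The paper then rederives the recursion by explicitly expanding $A_{k+1}$ in terms of $A_k,B_k$, but that bookkeeping is equivalent to your one-line substitution $x=1/3$ in $R_k = xR_{k-1}+xR_{k-1}^2$. Your route trades this explicit algebraic splitting for singularity analysis (a transfer estimate on $(R_k(x)-R_k(1/3))(1-2x-3x^2)^{-1/2}$), which is heavier machinery but avoids tracking $A_k,B_k$ through the recursion; the paper's route stays entirely within Bender's Lemma as stated, at the cost of the decomposition. You correctly identified the obstacle---that Bender does not apply directly to $R_k\cdot(1-2x-3x^2)^{-1/2}$ because both factors have radius $1/3$---and the paper's $A_k,B_k$ decomposition is precisely its device for removing it.
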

\begin{proof}
Consider $R_k$ it is of the form $R_k=\dfrac{A_k+B_k(\sqrt{1-2x-3x^2})}{2x}$ where $A_k$ and $B_k$ are polynomials.  This means that $$R_{k+1}=x\dfrac{A_k^2}{(2x)^2}+x\dfrac{A_k}{2x}+x\dfrac{B_k^2(1-2x-3x^2)}{(2x)^2}+x\left(\dfrac{B_k}{2x}+\dfrac{2A_kB_k}{(2x)^2}\right)\sqrt{1-2x-3x^2}.$$ 
Now we go back to look at $P_k(x)$.  We have $$P_{k+1}(x)=\dfrac{x\dfrac{A_k^2}{(2x)^2}+x\dfrac{A_k}{2x}+x\dfrac{B_k^2(1-2x-3x^2)}{(2x)^2}+x\left(\dfrac{B_k}{2x}+\dfrac{2A_kB_k}{(2x)^2}\right)\sqrt{1-2x-3x^2}}{\sqrt{1-2x-3x^2}}.$$
We can now simplify this and we get 
$$P_{k+1}(x)=\dfrac{x\dfrac{A_k^2}{(2x)^2}}{\sqrt{1-2x-3x^2}}+\dfrac{x\dfrac{A_k}{2x}}{\sqrt{1-2x-3x^2}}+\dfrac{\dfrac{B_k^2(1-2x-3x^2)}{(2x)^2}}{\sqrt{1-2x-3x^2}}+x\left(\dfrac{B_k}{2x}+\dfrac{2A_kB_k}{(2x)^2}\right).$$
Now we look at what each of these contribute to the asymptotic behavior of $P_{k+1}(x)$.  We know that $x\left(\dfrac{B_k}{2x}+\dfrac{2A_kB_k}{(2x)^2}\right)$ is just a polynomial so this summand doesn't contribute anything to the asymptotic behavior.  That leaves us with $$\dfrac{x\dfrac{A_k^2}{(2x)^2}}{\sqrt{1-2x-3x^2}}+\dfrac{x\dfrac{A_k}{2x}}{\sqrt{1-2x-3x^2}}+\dfrac{\dfrac{B_k^2(1-2x-3x^2)}{(2x)^2}}{\sqrt{1-2x-3x^2}}.$$
Given that we can now apply Bender's Lemma with the growth rate of $\dfrac{1}{3}$.  Applying that we see that $\dfrac{\dfrac{B_k^2(1-2x-3x^2)}{(2x)^2}}{\sqrt{1-2x-3x^2}}$ contributes nothing to the asymptotic behavior of $l(n)$ since plugging $\dfrac{1}{3}$ into the numerator we get 0.  So all that is left is $$\dfrac{x\dfrac{A_k^2}{(2x)^2}}{\sqrt{1-2x-3x^2}}+\dfrac{x\dfrac{A_k}{2x}}{\sqrt{1-2x-3x^2}}.$$
This gives us the desired result that $p_{k+1}=\dfrac{1}{3}p_k^2+\dfrac{1}{3}p_k$.
\end{proof}
\begin{corollary}
The growth rate of the asymptotic probabilities that a vertex is $k$ protected is $\frac{1}{3}$.
\end{corollary}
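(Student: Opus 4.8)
The plan is to extract everything we need from the recurrence $p_k = \frac13 p_{k-1} + \frac13 p_{k-1}^2$ of the preceding theorem, reading ``growth rate'' as the limiting ratio $\lim_{k\to\infty} p_{k+1}/p_k$. In factored form the recurrence reads $p_{k+1} = \frac13 p_k(1+p_k)$, so the ratio of consecutive terms is exactly $p_{k+1}/p_k = \frac13(1+p_k)$; hence the corollary is equivalent to the statement $p_k \to 0$.

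First I would record the starting data and monotonicity. Every vertex is $0$-protected, so $p_0 = 1$, and then $p_1 = \frac13 + \frac13 = \frac23$, consistent with $p_1$ being the complement of the leaf proportion $\frac13$ computed in Section~2. For $k \ge 1$ one has $0 < p_k \le \frac23 < 1$ by induction: if $0 < p_k \le \frac23$ then $p_{k+1} = \frac13 p_k(1+p_k) \le \frac13\cdot\frac23\cdot\frac53 < \frac23$, and $p_{k+1} > 0$ is clear. Moreover $p_{k+1} = \frac13 p_k(1+p_k) \le \frac59 p_k < p_k$, so $(p_k)_{k\ge 1}$ is strictly decreasing and bounded below by $0$, hence convergent, say to $L \ge 0$.

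Next I would identify $L$. Letting $k \to \infty$ in $p_{k+1} = \frac13 p_k(1+p_k)$ gives $L = \frac13 L(1+L)$, i.e. $L(L-2)=0$, so $L \in \{0,2\}$; since $0 \le L \le p_1 = \frac23 < 1$, necessarily $L = 0$. Substituting into $p_{k+1}/p_k = \frac13(1+p_k)$ then yields $\lim_{k\to\infty} p_{k+1}/p_k = \frac13(1+0) = \frac13$, the claimed growth rate.

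I do not expect a genuine obstacle here; the only points needing care are fixing the intended meaning of ``growth rate'' and justifying convergence of $(p_k)$ before passing to the limit in the recurrence. One can in fact sidestep the fixed-point computation entirely: the bound $p_{k+1} \le \frac59 p_k$ forces $p_k \to 0$ by comparison with a geometric sequence, after which $p_{k+1}/p_k = \frac13(1+p_k) \to \frac13$ is immediate.
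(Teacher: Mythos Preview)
Your proof is correct and follows essentially the same route as the paper: both reduce the claim to showing $p_k \to 0$ and then read off the ratio $p_{k+1}/p_k = \tfrac13(1+p_k) \to \tfrac13$ from the recurrence. Your treatment of the step $p_k \to 0$ is in fact more careful: the paper infers this from ``strictly decreasing and non-negative,'' which by itself only gives convergence to some $L \ge 0$, whereas you close the gap either via the fixed-point equation $L = \tfrac13 L(1+L)$ or via the explicit geometric bound $p_{k+1} \le \tfrac59 p_k$.
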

\begin{proof}
From the previous theorem we know that $p_k=\frac{1}{3}p_{k-1}+\frac{1}{3}p_{k-1}^2$. From this we can deduce that the lower growth rate is $\frac{1}{3}$ since the $p_k$ are probabilities and hence non-negative.  For the upper growth rate we observe that the $p_k$ are strictly decreasing and non-negative.  They are strictly decreasing because for every vertex that is $k$-protected there is at least one vertex that is $(k-1)$-protected, its child.  This means that for any $\epsilon$ there exist an m such that for any $l>m, \hspace{.1 in} p_{l} \leq \epsilon.$ Thus for $k \geq m$ we have $p_{k+1} \leq \frac{1}{3}p_k+\frac{1}{3}\epsilon p_k$ that gives an upper growth rate of $\frac{1}{3}+\frac{\epsilon}{3}$ and that goes towards $\frac{1}{3}$ as $\epsilon$ goes to zero.
\end{proof}
\begin{center}
 \begin{tabular}{|c | c|} 
 \hline
 Protection level & Probability $k$-protected $\approx$\\ [.5ex] 
 \hline
 1 & .66666667\\ 
 \hline
 2 & .37037037\\
 \hline
 3 & .16918153\\
 \hline
 4 & .06593464\\
 \hline
 5 & .02342734\\ 
\hline
6 & .007992060\\
[1ex] 
 \hline
\end{tabular}
\end{center}
%\subsection {Expected Value}
%From the previous section we were able to get upper and lower bounds for the probability that a vertex is at least k protected which will allow us to calculate expected value \begin{theorem}
%The expecteed rank of a vertex is between 1.30760462534 and 1.30760462783
%\end{theorem}
%\begin{proof}
%Using the formula $\mathbb{E}(X)=\displaystyle\sum_{k\geq 0} \mathbb{P}(X \geq k)$  We can get any desired level of precision by directly calculating and adding up the first n terms and then estimating the tail with a geometric series.  Using that approximation with the first 10 terms we get a lower bound of 1.30760462534 and an upper bound of 1.30760462783
%\end {proof}
\section{Balanced Vertexes}
We can use the same technique as in the preceeding to calculate the proportion of balanced vertices of rank $k$.
\begin{lemma}
The generating function for the number of trees whose root is balanced and of rank k, call it $B_k(x)$ is a polynomial.
\end{lemma}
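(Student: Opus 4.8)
The plan is to prove this by induction on $k$, mimicking the recursive structure already used for $R_k(x)$ in the $k$-protected case, but tracking the stronger "balanced of rank $k$" condition. First I would set up the base case: a vertex is balanced of rank $0$ exactly when it is a leaf, so $B_0(x) = x$, which is a polynomial. (One might also record $B_1(x)$: the root is balanced of rank $1$ iff it is not a leaf and every child is a leaf, so the root has one or two leaf-children, giving $B_1(x) = x^2 + x^3$, again a polynomial.)

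For the inductive step, the key observation is the analogue of the protection recurrence, but with the rank bookkeeping made precise. A vertex is balanced of rank $k$ iff it is not a leaf and \emph{every} child is balanced of rank $k-1$: if all children are balanced of rank exactly $k-1$, then every root-to-leaf path through the root has length exactly $1 + (k-1) = k$, so the root is balanced of rank $k$; conversely, if the root is balanced of rank $k$ then all its subtrees must have all their root-to-leaf paths of the common length $k-1$, i.e.\ each child is balanced of rank $k-1$. This yields the recurrence
\begin{equation*}
B_k(x) = x\,B_{k-1}(x) + x\,B_{k-1}(x)^2,
\end{equation*}
where the first term handles the root having a single child and the second the root having two children (each independently a tree whose root is balanced of rank $k-1$, hence counted by $B_{k-1}(x)$). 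Since $B_{k-1}(x)$ is a polynomial by the inductive hypothesis, the right-hand side is a polynomial, closing the induction.

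The main thing to get right — and the only real obstacle — is the correctness of the recurrence itself, specifically that it captures rank \emph{exactly} $k$ rather than "$\geq k$". This is where the balanced condition does the work: unlike $k$-protection, where a child of a $k$-protected vertex need only be $(k-1)$-protected and could be of much higher rank, here forcing every child to be balanced of rank exactly $k-1$ forces all leaf-distances below the root to coincide at $k-1$, and then adding the root edge makes all of them exactly $k$. I would state this equivalence carefully as the crux of the argument, and note that it is precisely the reason $B_k$ involves only $B_{k-1}$ (and not a generating function like $M(x)$ for "arbitrary subtree"): there is no freedom in the shape of the subtrees beyond the rank constraint. The polynomiality is then immediate, and in fact the recurrence shows $\deg B_k$ roughly doubles at each step, consistent with $B_k(x)$ counting trees all of whose leaves sit at depth $k$.
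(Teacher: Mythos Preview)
Your proof is correct, but it takes a different route from the paper's. The paper argues directly: if the root is balanced of rank $k$, then every root-to-leaf path has length exactly $k$, so the tree has at most $2^{k+1}-1$ vertices (the full binary tree of depth $k$ being the extremal case), and hence $B_k(x)$ has no term beyond $x^{2^{k+1}-1}$. Your argument instead proceeds by induction via the recurrence $B_k(x) = xB_{k-1}(x) + xB_{k-1}(x)^2$, with base case $B_0(x)=x$. The paper's approach is shorter and yields an explicit degree bound immediately; your approach has the advantage that it establishes and justifies the recurrence itself, which the paper only states later (in the proof of Theorem~4.3) without the careful verification you give that ``balanced of rank exactly $k$'' really is equivalent to ``every child balanced of rank exactly $k-1$''. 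So your argument does a bit more work up front but pays for itself downstream.
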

\begin {proof}
If the root is balanced and rank $k$ then all paths to a leaf must be of length $k$.  Thus the largest tree we can have is a full binary tree with $k+1$ levels.  This means we can have at most $2^{k+1}-1$ vertices in our tree.  Thus the largest term that can appear in $B_k(x)$ is an $x^{2^{k+1}-1}$.
\end{proof}
\begin {theorem}
The generating function for the number of vertices who are k protected and balanced is given by $B_k^*(x)=\dfrac{B_k(x)}{\sqrt{1-2x-3x^2}}$.
\end{theorem}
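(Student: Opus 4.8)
The plan is to mimic exactly the argument used for leaves and for $k$-protected vertices, since the structural recursion is identical. Let $B_k^*(x)$ count, over all Motzkin trees, the number of vertices that are balanced and of rank $k$, and let $B_k(x)$ (already shown to be a polynomial) count Motzkin trees whose \emph{root} is balanced of rank $k$. I would isolate the root of an arbitrary Motzkin tree: a balanced-rank-$k$ vertex either is the root (contributing $B_k(x)$), or lies in the unique subtree hanging off a unary root (contributing $x B_k^*(x)$), or lies in one of the two subtrees hanging off a binary root (contributing $2x B_k^*(x) M(x)$, using that the other subtree is an arbitrary Motzkin tree enumerated by $M(x)$). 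This yields the functional equation
\begin{equation*}
B_k^*(x) = B_k(x) + x B_k^*(x) + 2x B_k^*(x) M(x).
\end{equation*}

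Next I would solve this linear equation for $B_k^*(x)$, obtaining $B_k^*(x)\bigl(1 - x - 2xM(x)\bigr) = B_k(x)$. The only remaining task is the algebraic identity $1 - x - 2xM(x) = \sqrt{1-2x-3x^2}$, which follows immediately by substituting the closed form $M(x) = \dfrac{1-x-\sqrt{1-2x-3x^2}}{2x}$ from the first theorem: then $2xM(x) = 1 - x - \sqrt{1-2x-3x^2}$, so $1 - x - 2xM(x) = \sqrt{1-2x-3x^2}$ as claimed. Dividing through gives
\begin{equation*}
B_k^*(x) = \dfrac{B_k(x)}{\sqrt{1-2x-3x^2}},
\end{equation*}
which is the assertion.

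There is no real obstacle here; the argument is a verbatim repeat of the ``first part'' of the $k$-protected lemma with $R_k(x)$ replaced by $B_k(x)$, and indeed the paper can simply cite that computation. The only point that deserves a sentence of care is the justification of the decomposition itself — namely that whether a vertex is balanced of rank $k$ depends only on the subtree rooted at that vertex and not on the rest of the tree, so that counting occurrences of such vertices genuinely factors as (number of such rooted configurations) times (number of ways to complete the ambient tree). This is clear from the definitions of rank and balance, both of which are intrinsic to the subtree below a vertex, but it is worth stating explicitly so the multiplicative structure in the $2xB_k^*(x)M(x)$ term is unambiguous.
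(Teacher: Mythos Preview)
Your proof is correct and follows precisely the approach the paper intends: the paper's own proof consists of the single sentence that the argument is exactly the same as for $k$-protected vertices, and what you have written is that argument spelled out in full with $B_k(x)$ in place of $R_k(x)$. Your extra remark that balance and rank are intrinsic to the subtree below a vertex is a welcome clarification but not something the paper adds.
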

\begin {proof}
The proof is exactly the same as the proof for the number of vertices that are $k$ protected.
\end{proof}
The fact that we are only dealing with polynomials makes balanced much easier to deal with and allows us to calculate rank $k$ rather than $k$ protected.
\begin{theorem}
The asymptotic proportion of balanced vertices of rank $k$ is given by $b_k=\frac{1}{3}b_{k-1}+\frac{1}{3}b_{k-1}^2$.
\end {theorem}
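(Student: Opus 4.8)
The plan is to run the argument from the $k$-protected theorem, only now the situation is strictly easier because $B_k(x)$ is an honest polynomial rather than a combination of $1$ and $\sqrt{1-2x-3x^2}$, so none of the $\sqrt{\,\cdot\,}$-bookkeeping from that proof is needed. First I would record the tree-level recurrence $B_k(x) = xB_{k-1}(x) + xB_{k-1}(x)^2$, obtained by isolating the root exactly as for $R_k$: a root is balanced of rank $k$ precisely when it has a single child that is balanced of rank $k-1$, or two children that are both balanced of rank $k-1$, and in the two-child case the two subtrees are counted independently, producing the square; the base case is $B_0(x)=x$. In particular each $B_k$ has nonnegative coefficients and is not identically zero (it contains the monomial $x^{k+1}$ coming from the path), so $B_k(1/3)>0$ — this positivity is what will let Bender's Lemma apply.

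Next I would extract asymptotics from $B_k^*(x) = B_k(x)\cdot\dfrac{1}{\sqrt{1-2x-3x^2}}$. The factor $\dfrac{1}{\sqrt{1-2x-3x^2}} = \dfrac{1}{\sqrt{(1-3x)(1+x)}}$ has radius of convergence $1/3$ and the ratio of consecutive coefficients tends to $1/3$, while $B_k$ is entire; so Bender's Lemma with $A(x)=B_k(x)$, $b=1/3$, and $A(1/3)=B_k(1/3)\neq 0$ gives that the number $b_k^*(n)$ of balanced rank-$k$ vertices among all $n$-vertex Motzkin trees satisfies $b_k^*(n)\sim B_k(1/3)\,c_n$, where $c_n := [x^n]\,\dfrac{1}{\sqrt{1-2x-3x^2}}$.

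Then I would pin down the normalization. Since the balanced rank-$0$ vertices are exactly the leaves, we have $B_0(x)=x$, and the computation just made yields that the total number of leaves is $\sim B_0(1/3)\,c_n = \tfrac13 c_n$; but by the corollary in Section 2 the number of leaves is also $\sim \tfrac13\,v(n)$, where $v(n)$ is the total number of vertices in all $n$-vertex Motzkin trees. Hence $v(n)\sim c_n$, so $b_k = \lim_{n\to\infty} b_k^*(n)/v(n) = B_k(1/3)$ for every $k$.

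Finally, evaluating the recurrence of the first step at $x=1/3$ gives $b_k = B_k(1/3) = \tfrac13 B_{k-1}(1/3) + \tfrac13 B_{k-1}(1/3)^2 = \tfrac13 b_{k-1} + \tfrac13 b_{k-1}^2$, which is the claim; the value $b_0=\tfrac13$ from Section 2 is consistent with $B_0(1/3)=\tfrac13$. I do not expect a genuine obstacle here: the only two points needing a sentence of care are checking $B_k(1/3)\neq 0$ so that Bender's Lemma is applicable, and justifying that $c_n$ is the correct normalizer, for which piggybacking on the already-established leaf proportion is cleaner than separately computing the asymptotics of $v(n)$.
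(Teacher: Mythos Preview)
Your proposal is correct and follows essentially the same approach as the paper: use the polynomial nature of $B_k(x)$ to apply Bender's Lemma directly to $B_k^*(x)=B_k(x)/\sqrt{1-2x-3x^2}$, then evaluate the recurrence $B_k(x)=xB_{k-1}(x)+xB_{k-1}(x)^2$ at $x=1/3$. You are simply more careful than the paper in two places---explicitly verifying $B_k(1/3)>0$ so Bender's Lemma is applicable, and justifying the normalization $v(n)\sim c_n$ via the already-known leaf proportion---but these are refinements of the same argument rather than a different route.
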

\begin{proof}
The fact that $B_k(x)$ is a polynomial means we can simply plug in the growth rate of $\dfrac{x}{\sqrt{1-2x-3x^2}}$ directly into the polynomial and as we saw before that growth rate is $\dfrac{1}{3}$.  So we plug $\frac{1}{3}$ into $B_k$ and $B_k=xB_{k-1}(x)+xB_{k-1}^2(x)$ which gives the desired recurrence relationship.
\end{proof}
It is natural now to consider the proportion of vertices that are balanced.
\begin{lemma}
 Let $B^*(x)$ equal the generating function for the number of vertices who are balanced. Then $$B^*(x)=\displaystyle\sum_{k \geq 0} B_k^*(x)=\dfrac{\displaystyle\sum_{k \geq 0} B_k(x)}{\sqrt{1-2x-3x^2}}.$$
\end {lemma}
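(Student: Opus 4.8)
The plan is to partition the balanced vertices of each Motzkin tree according to their rank, read off the first equality coefficient by coefficient, and then factor the common denominator $1/\sqrt{1-2x-3x^2}$ out of the resulting sum by invoking the previous theorem.

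First I would record that a balanced vertex has a well-defined rank: if $v$ is balanced then the shortest and the longest parent-to-child path from $v$ down to a leaf have one common length $k=k(v)\ge 0$, and then $v$ is balanced of rank exactly $k$, with this $k$ unique. Hence in any fixed Motzkin tree the set of balanced vertices is the disjoint union, over $k\ge 0$, of the sets of balanced vertices of rank $k$. Summing this partition over all Motzkin trees with $n$ vertices, for each $n$, gives $[x^n]B^*(x)=\sum_{k\ge 0}[x^n]B_k^*(x)$, which is the first asserted equality $B^*(x)=\sum_{k\ge 0}B_k^*(x)$ — provided the right-hand side is a legitimate formal power series.

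To see that it is, I would note that a balanced vertex of rank $k$ has below it a parent-to-child path of length $k$, so the subtree it roots has at least $k+1$ vertices; thus no Motzkin tree on $n$ vertices contains a balanced vertex of rank $\ge n$. Consequently $[x^n]B_k^*(x)=0$ for all $k\ge n$, and by the same reasoning each polynomial $B_k(x)$ has no term of degree $\le k$ when $k\ge 1$ (with $B_0(x)=x$), so $[x^n]B_k(x)=0$ for $k\ge n$ as well. Therefore both $\sum_{k\ge 0}B_k^*(x)$ and $\sum_{k\ge 0}B_k(x)$ are locally finite: only the terms with $k<n$ affect the coefficient of $x^n$, so both are honest power series. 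Finally, applying the theorem $B_k^*(x)=B_k(x)/\sqrt{1-2x-3x^2}$ termwise and using that multiplication by the fixed series $(1-2x-3x^2)^{-1/2}$ commutes with locally finite sums (it is continuous in the $x$-adic topology), I would pull that factor out to conclude $B^*(x)=\bigl(\sum_{k\ge 0}B_k(x)\bigr)/\sqrt{1-2x-3x^2}$.

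I do not expect a genuine obstacle; the only place that needs care is the bookkeeping in the previous paragraph, namely checking that the infinite sums converge coefficientwise so that the termwise division is valid. It is also worth flagging the one conceptual point that makes the first equality true: each balanced vertex must land in exactly one summand on the right, which is precisely why one sums the rank-$k$ series $B_k^*$ rather than a protection-level series — a balanced vertex of rank $k$ is $j$-protected for every $j\le k$, so summing ``$k$-protected and balanced'' generating functions would overcount.
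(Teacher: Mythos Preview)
Your proof is correct and follows essentially the same approach as the paper: partition balanced vertices by their unique rank, verify the infinite sums are well defined as formal power series because $[x^n]B_k^*(x)=[x^n]B_k(x)=0$ once $k\ge n$, and then factor out the common denominator using the formula $B_k^*(x)=B_k(x)/\sqrt{1-2x-3x^2}$. Your write-up is more careful than the paper's own proof (in particular you spell out the local finiteness and the continuity of multiplication by $(1-2x-3x^2)^{-1/2}$), but the underlying argument is the same.
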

\begin {proof}
This follows from the fact that a vertex is balanced if and only if it is balanced of rank $k$ for some $k$.  We know that $\displaystyle\sum_{k \geq 0} B_k^*(x)$ is well defined as a formal power series since the coefficent of $x^n$ in $B_k^*(x)$ is 0 for $n <k+1$ similarly we also know that $\displaystyle\sum_{k \geq 0} B_k(x)$ is well defined.
\end {proof} 
The form of the power series shows that we would like to use Bender's lemma to get a value for the probability a vertex is balanced so we need to show that $\displaystyle\sum_{k \geq 0} B_k(x)$ is non-zero and has raidus of convergence strictly greater than $\frac{1}{3}$.
\begin{lemma}
Let $b(n)$ be the number of Motzkin trees on n vertices where the root is balanced. Then $b(n)\leq \dfrac{2.9^n}{n^2}$.
\end{lemma}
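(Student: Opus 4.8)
The plan is to work with the full generating function $\mathcal{B}(x) := \sum_{k \ge 0} B_k(x)$, which has non-negative coefficients and satisfies $[x^n]\mathcal{B}(x) = b(n)$. The whole difficulty is packed into the recursion $B_k(x) = x B_{k-1}(x) + x B_{k-1}(x)^2$ with $B_0(x) = x$; the idea is to control $\mathcal{B}$ at a convenient positive real point and then read off a coefficient bound.

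First I would fix a real number $c>0$ and set $\beta_k := B_k(c)$, so that $\beta_0 = c$ and $\beta_k = c\,\beta_{k-1}(1 + \beta_{k-1})$. I would then show that if $c$ is less than the positive root of $t^2 + t - 1 = 0$ (i.e. $c < \frac{\sqrt5-1}{2}$), the $\beta_k$ decrease to $0$ geometrically: since $\beta_k/\beta_{k-1} = c(1+\beta_{k-1}) \le c(1+c) = c+c^2 < 1$, an easy induction gives $\beta_k \le c\,(c+c^2)^k$ and hence $\mathcal{B}(c) = \sum_{k\ge0}\beta_k \le \frac{c}{1-c-c^2} < \infty$. In particular the radius of convergence of $\mathcal{B}$ is at least $\frac{\sqrt5-1}{2} > \frac13$, and taking for instance $c = \frac12$ gives the clean estimate $\mathcal{B}(\frac12) \le 2$, whence $b(n)\,2^{-n} \le \mathcal{B}(\frac12) \le 2$, i.e. $b(n) \le 2^{\,n+1}$.

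Since $2 < 2.9$, the bound $b(n) \le 2^{n+1}$ already beats $2.9^n/n^2$ once $n^2 \le \frac12(1.45)^n$; the right-hand side is increasing in $n$ for $n \ge 5$ and overtakes $n^2$ at an explicit small threshold $N_0$, so the inequality holds for all $n \ge N_0$. For the finitely many remaining $n < N_0$ one verifies $b(n) \le 2.9^n/n^2$ by directly computing $b(n)$ from $B_0(x)=x$ and $B_k(x) = x B_{k-1}(x) + x B_{k-1}(x)^2$ (only the $B_k$ with $k+1 \le n$ contribute to $[x^n]$); these values are tiny.

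The step I expect to be the real obstacle is matching the constants in the moderate range of $n$: the crude estimate $\mathcal{B}(c) \le \frac{c}{1-c-c^2}$ has a constant that blows up as $c \to \frac{\sqrt5-1}{2}$, while the corresponding base $1/c$ only shrinks to $\frac{\sqrt5+1}{2}\approx 1.618$, so for $n$ in the teens no single choice of $c$ in this family is by itself strong enough, and it is the direct computation of the first several $b(n)$ — equivalently, establishing the sharp rate $b(n)=O\bigl((\tfrac{\sqrt5+1}{2})^n\bigr)$ by analysing the (dynamical) singularity of $\mathcal{B}$ at $x = \frac{\sqrt5-1}{2}$ — that closes the gap. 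Everything else, namely the geometric decay of the $\beta_k$ and the passage from $\mathcal{B}(c)<\infty$ to the coefficient bound, is routine.
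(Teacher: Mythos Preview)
Your argument is correct and takes a genuinely different route from the paper. The paper proceeds by induction on $n$, using the convolution-type inequality
\[
b(n)\;\le\;\sum_{k=\log_2 n}^{\,n-\log_2 n} b(k)\,b(n-k-1),
\]
which records the two-child decomposition of a balanced root together with the observation that each balanced subtree of a tree on $n$ vertices must have at least logarithmically many vertices; one then feeds the inductive hypothesis $b(m)\le 2.9^m/m^2$ into the right-hand side. You instead evaluate the full generating function $\mathcal{B}(x)=\sum_{k\ge 0}B_k(x)$ at a positive real point by analysing the one-dimensional recursion $\beta\mapsto c\,\beta(1+\beta)$, obtain $\mathcal{B}(c)\le \frac{c}{1-c-c^2}$ for $c<\frac{\sqrt5-1}{2}$, and read off $b(n)\le \mathcal{B}(c)\,c^{-n}$.

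What each approach buys: yours pins down the actual radius of convergence of $\mathcal{B}$ as $\frac{\sqrt5-1}{2}$, hence the much sharper growth rate $b(n)=O\!\bigl((\tfrac{1+\sqrt5}{2})^n\bigr)$, and the analysis is clean and conceptual. The paper's inductive convolution, on the other hand, produces the polynomial factor $1/n^2$ directly from the logarithmic truncation of the sum, so the stated bound comes out in one stroke without a separate small-$n$ verification. Your one remaining loose end---the finite check for $n$ below the threshold $N_0\approx 18$---is indeed mechanical, since only $B_0,\dots,B_{n-1}$ contribute to $[x^n]\mathcal{B}$, but it does have to be carried out explicitly for the proof to be complete.
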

\begin{proof}
This can be proven by induction and the fact that 
\newline
$b(n)\leq \displaystyle\sum_{k=log_2(n)}^{n-log_2(n)} b(k)b(n-k-1)$.
\end{proof}
From this we know that the exponential growth rate is 2.9 and hence it has a radius of convergence greater than 3.  From this we know that the probability a vertex is balanced converges as n goes to infinity.
\begin{corollary}
The probability that a vertex is balanced is between 
\newline
0.568362259762727779 and  0.5683622597627278
\end{corollary}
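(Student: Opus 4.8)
The plan is to prove that the probability a vertex is balanced equals $S:=\sum_{k\ge 0}b_k$, where $b_0=\tfrac13$ and $b_k=\tfrac13 b_{k-1}+\tfrac13 b_{k-1}^2$, and then to bound $S$ numerically with rigorous error control. For the reduction I would start from the lemma $B^*(x)=\frac{\sum_{k\ge0}B_k(x)}{\sqrt{1-2x-3x^2}}$. The numerator $\sum_{k\ge0}B_k(x)=\sum_n b(n)x^n$ is the generating function for Motzkin trees with balanced root, so by the bound $b(n)\le 2.9^n/n^2$ it has radius of convergence at least $\tfrac1{2.9}>\tfrac13$; moreover $\frac1{\sqrt{1-2x-3x^2}}$ has radius of convergence $\tfrac13$ with the ratio of consecutive coefficients tending to $\tfrac13$. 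Hence Bender's Lemma applies, the value of the numerator at $\tfrac13$ being $\sum_{k}B_k(\tfrac13)$, which is nonzero. Since $B_k(x)=xB_{k-1}(x)+xB_{k-1}^2(x)$ and $B_0(x)=x$, substituting $x=\tfrac13$ gives exactly $B_k(\tfrac13)=b_k$, so Bender yields $[x^n]B^*(x)\sim S\cdot[x^n]\frac1{\sqrt{1-2x-3x^2}}$.

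To turn this into a probability I would apply Bender's Lemma in the same way to $L(x)=\frac{x}{\sqrt{1-2x-3x^2}}$, obtaining $[x^n]L(x)\sim\tfrac13[x^n]\frac1{\sqrt{1-2x-3x^2}}$; combined with the Section~2 corollary that the leaf proportion tends to $\tfrac13$, this forces $[x^n]\frac1{\sqrt{1-2x-3x^2}}\big/(n m_n)\to 1$, where $n m_n$ is the number of vertices among all Motzkin trees on $n$ vertices. Dividing the asymptotic for $[x^n]B^*(x)$ by $n m_n$ then gives that the probability a vertex is balanced is $S$.

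For the numerical bound, note that $(b_k)$ is positive, strictly decreasing, and tends to $0$, and that the recursion gives $\tfrac13 b_{k-1}\le b_k\le\frac{1+b_N}{3}b_{k-1}$ for every $k\ge N$. Summing geometric series, for any $N$,
$$\sum_{k=0}^{N}b_k+\frac{b_N}{2}\ \le\ S\ \le\ \sum_{k=0}^{N}b_k+\frac{(1+b_N)b_N}{2-b_N},$$
and the gap between the two bounds is $\frac{3b_N^2}{2(2-b_N)}$. Computing $b_0,\dots,b_N$ from the recursion with verified (interval) arithmetic for $N$ large enough that this gap falls below the target precision then pins $S$ down to the stated digits.

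The main obstacle is the bookkeeping in the first two paragraphs rather than anything deep: one must check that Bender's Lemma genuinely applies to the \emph{infinite} sum $\sum_k B_k(x)$ — this is precisely what the estimate $b(n)\le 2.9^n/n^2$ is needed for — and that evaluating that power series at $\tfrac13$ really does equal $\sum_k b_k$ (legitimate because every coefficient involved is non-negative). The numerical step is routine, but the rounding must be handled rigorously so that the displayed two-sided bound is actually certified.
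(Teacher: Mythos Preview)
Your proposal is correct and follows essentially the same route as the paper: reduce the probability to $S=\sum_{k\ge0}b_k$ via Bender's Lemma (using the $b(n)\le 2.9^n/n^2$ bound to control the radius of convergence of $\sum_k B_k(x)$), and then bound the tail of $S$ by the geometric estimates $\tfrac13 b_{k-1}\le b_k\le(\tfrac13+b_N)b_{k-1}$ that follow from the recursion. You are a bit more explicit than the paper in normalizing by $n m_n$ (via comparison with $L(x)$) and in justifying the interchange of summation with evaluation at $x=\tfrac13$, but the substance of the argument is the same.
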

\begin{proof}
The probability a vertex is balanced is simply the sum of the probabilities that it is $k$ balanced taken over all $k$.  This is because if a vertex is balanced it must be balanced of some rank $k$.   So we have $P(balanced)=\displaystyle\sum_{k=0}^\infty b_k$.  Now we need to estimate this sum.  We have that $\left(\dfrac{1}{3}\right)^k(b_m) \leq b_{k+m} \leq \left(\dfrac{1}{3}+b_m\right)^k(b_m)$.  We can prove this inductively since we know that $b_{k+1}=\dfrac{1}{3}b_k+\dfrac{1}{3}b_k^2$ which means $\dfrac{1}{3}b_k \leq b_{k+1} = \left(\dfrac{1}{3}+b_k\right)b_k$ we can then continue this getting that 
\newline
$$\left(\dfrac{1}{3}\right)^2 b_k \leq b_{k+2} =  \left(\dfrac{1}{3}+b_{k+1}\right)\left(\dfrac{1}{3}+b_k\right)b_k \leq \left(\dfrac{1}{3}+b_k\right)^2 b_k.$$ This is because $b_k$ is a decreasing sequence.  We can continue this to get the originally stated inequality.  This gives us $$\displaystyle\sum_{k=0}^{20} b_k+\displaystyle\sum_{k=20}^\infty \left(\dfrac{1}{3}\right)^{20-k}b_{20} \leq P(balance) \leq  \displaystyle\sum_{k=0}^{20} b_k+\displaystyle\sum_{k=20}^\infty \left(\dfrac{1}{3}+b_{20}\right)^{20-k}b_{20}.$$
The infinite sum on the far left and far right side of this inequality both are simply geometric series so are easily summable.  This gives us the stated lower and upper bounds.
%We approximate by summing the first 20 terms and using a geometric series after that.  For the lower bound we ues the ratio as 1/3 and for the upper bound we use a ratio of $\frac{1}{3}+B_{21}$
\end{proof}
\subsection{Expected Value}
We can use a similar technique to calculate the expected rank of balanced vertices.   
\begin{theorem}
The expected value of the rank of balanced vertices exist as n goes to infinity exist.
\end{theorem}
\begin{proof}
Let $EB(x)=\displaystyle\sum_{k=0}^\infty kB_k^*(x)$.  This is well defined as a formal power series for the same reason that $B_k^*(x)$ was well defined.  To apply Bender's lemma to this we need to find the exponential growth rate of the numerator.  Let $eb(n)$ be the coefficent of $x^n$ in the numerator of $EB(x)$ we have that $eb(n) \leq nb^*(n)$ since $x^n$ will not appear in any $B_k^*(x)$ for $k \geq n$  and $b^*(m)=\displaystyle\sum_{k=0}^\infty b_k^*(m).$  From this we know that $eb(n) \leq \dfrac{2.9^n}{n}$, so the numerator of the generating function has exponential growth rate of at most 2.9.  Because the exponential growth rate is less than 3 we know the function has radius of convergence greater than $\dfrac{1}{3}$  which means that Bender's lemma applies. Let $N(x)$ be the numerator of the generating function for $EB(x)$ then asymptotically $EB(x)$ is of the form $N(\frac{1}{3})k_n$ where $k_n$ is the $x^n$ term in $\dfrac{1}{\sqrt{1-2x-3x^2}}$ and since we know the proportion of $k_n$ to all vertices in all Motzkin trees this means that the stated proportion exist as n goes to infinity.
\begin {lemma}
The ratio between the coefficent of $x^n$ in $EB(x)$ and and the number of all vertices in all Motzkin trees divided by the proportion of vertices that are balanced is equal to the expected rank of a balanced vertex in a tree of size n.
\end {lemma}
Combining lemma 4.7 with the proof that the asymptotic proportion of $EB(x)$ compared to the number of vertices in all Motzkin trees exist shows us that the expected value exist as n goes to infinity. 
\end{proof}
\begin{theorem}
The expected rank of a vertex that is balanced is between 
\newline
.6464847301966947 and .64648473019669473
\end {theorem}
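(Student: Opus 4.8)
The plan is to run the same truncation-and-tail-bound argument that produced the bounds on the probability that a vertex is balanced, but now applied to the numerator of $EB(x)$. By the lemma just stated together with the preceding existence result, the limiting expected rank of a balanced vertex equals
\[
E[\text{rank}\mid\text{balanced}]=\frac{\sum_{k\ge 0}k\,b_k}{\sum_{k\ge 0}b_k},
\]
where $b_k$ is the asymptotic proportion of balanced vertices of rank $k$, satisfying $b_0=\tfrac13$ and $b_{k+1}=\tfrac13 b_k+\tfrac13 b_k^2$. The denominator has already been squeezed into the interval $[\,0.568362259762727779,\ 0.5683622597627278\,]$, so it suffices to produce tight lower and upper bounds for the numerator $S:=\sum_{k\ge 0}k\,b_k$; the two one-sided bounds on the ratio then come from dividing the lower bound for $S$ by the upper bound for the denominator, and the upper bound for $S$ by the lower bound for the denominator.

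First I would compute $b_0,\dots,b_{20}$ as exact rationals from the recurrence and form the exact partial sum $\sum_{k=0}^{20}k\,b_k$. For the tail I would invoke the inequalities $\left(\tfrac13\right)^{j}b_{20}\le b_{20+j}\le \left(\tfrac13+b_{20}\right)^{j}b_{20}$ established earlier, multiply by the weight $20+j$, and sum over $j\ge 1$. With $r$ set to $\tfrac13$ for the lower estimate and to $\tfrac13+b_{20}$ for the upper estimate, this is the arithmetico-geometric series
\[
\sum_{j\ge 1}(20+j)\,b_{20}\,r^{\,j}=b_{20}\!\left(\frac{20\,r}{1-r}+\frac{r}{(1-r)^2}\right),
\]
which is again an exact rational. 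Adding the exact head to each tail estimate gives exact rational lower and upper bounds for $S$, and dividing as above produces the stated interval for the expected rank; a final numerical evaluation confirms that it is $[\,0.6464847301966947,\ 0.64648473019669473\,]$.

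The computation uses no new ideas, so the only real point to check is that the cutoff $k=20$ is generous enough. Two error sources feed into the final interval: the gap between the upper and lower tail estimates for $S$, which is $O(b_{20}^2)$ since the two geometric ratios differ by only $b_{20}$ and is therefore negligible; and the width of the denominator interval, which after division contributes roughly $(S/D^2)$ times that width to the ratio. The main obstacle is thus purely computational bookkeeping: verifying that this denominator term sets the reported precision (near $10^{-17}$), that $b_{20}$---of order $3^{-20}$---is small enough that the tail contributes nothing at that scale, and, if it is not, pushing the cutoff past $20$ until the propagated error is safely below the separation between the two claimed bounds.
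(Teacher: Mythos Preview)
Your approach is essentially the paper's own: write the expected rank as $\bigl(\sum_{k\ge 0}k\,b_k\bigr)\big/\bigl(\sum_{k\ge 0}b_k\bigr)$, compute an exact head of the numerator through about $k=20$, bound the tail above and below using the geometric sandwich $(1/3)^j b_{20}\le b_{20+j}\le (1/3+b_{20})^j b_{20}$ summed against the weights via an arithmetico--geometric series, and then divide by the previously obtained bounds on the denominator. The only differences are cosmetic bookkeeping (where you cut off the head and how you package the closed form for $\sum_{j}(20+j)r^{j}$), so your proposal matches the paper's argument.
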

\begin{proof}
We use the standard expected value formula $\mathbb{E}(X)=\displaystyle\sum_{j\geq 0} jP(X=j)$.  Getting upper and lower bounds for this is very similar to the technique we used to find the probability a vertex is balanced.  We calculate directly the sum of the first 19 terms and then we use the fact that $\displaystyle\sum_{j\geq 20} jr^{j-20}=\dfrac{20-19r}{(-1+r)^2}$ with r the common ratio so to get a lower bound we use $\frac{1}{3}$ and to get an upper bound we get $\frac{1}{3}+b_{20}$.  Solving this and dividing by the probability a vertex is balanced and we get the stated expectation.
\end{proof}
\section{Open questions}
We were able to prove that the expected value existed for balanced vertices and calculated fairly accurate bounds for it but the question remains can you show that the expected value exist for general vertices nad if so what is the expected value.
\vspace {.1 in}
\newline
The sequence for number of leaves in al Motzkinl trees with $n-1$ steps is a rather interesting sequence in that it is the number of all paths from (0,0) to $(n,n)$ that avoid 3  right steps in a row that starts with a right step and ends with an up step.  This is interesting because there is an easy bijection from the number of Motzkin trees to the number of Dyck paths that avoid 3 right steps.  Is there a similar bijection for leaves.  If so does it translate easily to rooted planar trees with more than 2 children.
\vspace {.1 in}
\newline
Another question would be what proportion of vertices are leaves, or more generally $k$-protected, for rooted planar trees where the out degree of every vertex is between 0 and $n$ and if we take these probabilities as sequences do the sequences converge to the proportion of vertices that are $k$-protected in all rooted planar trees.


\begin{thebibliography}{10}
\bibitem {A}Gi-Sang Cheon, Louis W. Shapiro \textit{Protected points in ordered trees},
Applied Mathematics Letters vol. 21 no. 5 pp. 516 - 520
\bibitem {B}, Toufik Mansou, \textit{Protected points in k-ary trees},
Applied Mathematics Letters vol. 24 number 4 pp.478 - 480
\bibitem {C} Miklos Bona, \textit{On the number of vertices of each rank in phylogenetic trees and their generalizations},
Discrete Mathematics \& Theoretical Computer Science, Vol. 18 no. 3, Combinatorics (April 11, 2016) dmtcs:1431
\bibitem {D} Philippe Flajolet, Robert Sedgewick \textit{Analytic Combinatorics}, Cambridge University Press, Cambridge (2009)
\end{thebibliography}
\end{document}